\newtheorem{theorem}{Theorem}
\newtheorem{lemma}[theorem]{Lemma}
\newtheorem{corollary}[theorem]{Corollary}
\newtheorem{definition}{Definition}
\newtheorem{problem}{Problem}
\newtheorem{proposition}[theorem]{Proposition}
\newcommand{\propositionref}[1]   {Proposition \ref{#1}\xspace}
\newcommand{\lemmaref}[1]         {Lemma \ref{#1}\xspace}
\newcommand{\algorithmref}[1]     {Algorithm \ref{#1}\xspace}
\renewcommand{\Pr}{\mathop{\rm Pr}\nolimits}
\newcommand{\overbar}[1]{\mkern 1.5mu\overline{\mkern-1.5mu#1\mkern-1.5mu}\mkern 1.5mu}
\title{\LARGE \bf
Model-Free Stochastic Reachability Using Kernel Distribution Embeddings
}
\author{Adam J. Thorpe, Meeko M. K. Oishi
\thanks{This material is based upon work supported by the National Science Foundation
    under NSF Grant Number CMMI-1254990, IIS-1528047, and CNS-1836900,
    and by the Air Force Research Lab under
    AFRL Cooperative Agreement FA9453-18-2-0022, Agile Manufacturing for High Value, Low Volume Production.
    Any opinions, findings, and conclusions or recommendations expressed in this
    material are those of the authors and do not necessarily reflect the views
    of the National Science Foundation. \newline\indent
    A. Thorpe and M. Oishi are with Electrical \& Computer Eng., University of New Mexico, Abq., NM. Email: {\tt\{ajthor,oishi\}@unm.edu}.
    }
}
\begin{document}
\maketitle

\begin{abstract}
We present a data-driven solution to the terminal-hitting stochastic reachability problem for a Markov control process.
We employ a nonparametric representation of the stochastic kernel as a conditional distribution embedding within a reproducing kernel Hilbert space (RKHS).
This representation avoids intractable integrals in the dynamic recursion of the stochastic reachability problem
since the expectations can be calculated as an inner product within the RKHS.
We demonstrate this approach on a high-dimensional chain of integrators and on Clohessy-Wiltshire-Hill dynamics.
\end{abstract}

\begin{IEEEkeywords}
Stochastic Optimal Control, Machine Learning, Autonomous Systems
\end{IEEEkeywords}


\section{Introduction}

Stochastic reachability is an established verification tool to assure that a system will reach a desired state without violating predefined safety constraints
with at least a desired likelihood.
%
%
%
The solution to the stochastic reachability problem is based on dynamic programming
\cite{summers2010verification, abate2008probabilistic},
which poses significant computational challenges.
Optimization-based solutions have garnered modest computational tractability
via chance constraints \cite{lesser2013stochastic, vinod2019piecewise}, sampling
methods \cite{sartipizadeh2018voronoi, vinod2018multiple, vinod2018stochastic}, and convex optimization with Fourier
transforms \cite{vinod2018scalable, vinod2017scalable}, but are limited to linear
dynamical systems with Gaussian or log-concave disturbances.
Methods using approximate dynamic programming
\cite{kariotoglou2013approximate} and
particle filtering \cite{manganini2015policy, lesser2013stochastic},
are applicable to classes of nonlinear systems, but have only been demonstrated on systems with
moderate dimensionality.

Further, for many dynamical systems, presumption of accurate knowledge of dynamics and uncertainty is unrealistic. Historically, such uncertainty is handled through approximations and introduction of error terms that bound unknown elements \cite{mitchell2005time, fisac2015reach}.  With the rapid increase in the use of learning elements that are resistant to traditional models for control and formal methods, as well as the involvement of humans, such approaches may either be overly conservative or even simply inaccurate. For example, human inputs may be highly heterogeneous, may not follow a known distribution, and are often data-driven processes that may be biased when analyzed through sampling methods.

\emph{We propose to use conditional distribution embeddings within a reproducing kernel Hilbert space
(RKHS) to solve the stochastic reachability problem.}
As a tool for stochastic reachability, kernel methods offer significant advantages over state-of-the-art: 1) they are model-free, so can accommodate data-driven stochastic processes and nonlinear dynamics, 2) they are convergent in probability, and 3) they do not suffer from the curse of dimensionality
\cite{bellman2015applied}
or from issues of numerical
quadrature that plague optimization based approaches.
The primary computational challenge arises in the inversion of a matrix that scales with the number of data points, leading to computational complexity that is exponential in the size of the data.
%

Kernel methods are an established learning technique
\cite{scholkopf2001learning, shawe2004kernel, aronszajn1950theory}, and have
recently been applied to dynamic programming problems
with additive cost function and infinite time horizon
\cite{grunewalder2012modelling}.
Kernel methods broadly enable nonparametric inference using
kernel embeddings of distributions.  They can
capture the features of arbitrary statistical
distributions in a data-driven fashion \cite{song2013kernel,
smola2007hilbert}.
Kernel methods
do not suffer from biases or prior assumptions on the system model, and
are computationally efficient because they are convergent and
non-iterative \cite{scholkopf2001learning}.
These techniques have also been applied to several problems in dynamical
systems, including controller synthesis
\cite{lever2015modelling}, partially-observable systems \cite{nishiyama2012hilbert}, and estimation
of graphical models \cite{song2009hilbert}.

{\em The main contribution of this paper is the application of conditional distribution embedding
to compute the stochastic reachability probability measure, to enable model-free verification without
invoking a statistical approach.}
This is particularly relevant for systems with
black-box elements, such as autonomous or human-in-the-loop systems, which have
traditionally been resistant to formal verification techniques.
We tailor the approach in \cite{grunewalder2012modelling} to
accommodate the multiplicative cost and finite horizon associated with the stochastic reachability
dynamic program.
We apply kernel methods to compute the stochastic reachability
probability measure 
by representing
the stochastic kernel
as a conditional distribution embedding within an RKHS.
%
%

The paper organization is as follows.
Section \ref{section: problem formulation} formulates the problem.
%
Section \ref{section: kernel mean embeddings} applies
conditional distribution embeddings to compute the stochastic reachability probability measure.
%
%
Section \ref{section: numerical results} demonstrates our approach on
three examples: a double integrator to enable validation with a ``truth'' model
via dynamic programming, a 10,000-dimensional integrator to demonstrate scalability, and spacecraft rendezvous and docking.


\section{Problem Formulation}
\label{section: problem formulation}

For sets $\mathcal{A}$ and $\mathcal{B}$,
the set of all elements of $\mathcal{A}$ which are not in $\mathcal{B}$
is denoted as $\mathcal{A} \backslash \mathcal{B}$.
We denote the indicator function
as $\boldsymbol{1}_{\mathcal{A}}(x) = 1$ if $x \in \mathcal{A}$
and $\boldsymbol{1}_{\mathcal{A}}(x) = 0$ if $x \notin \mathcal{A}$.



Let $\Omega$ denote a sample space
and $\mathcal{F}(\Omega)$ denote the $\sigma$-algebra relative to $\Omega$.
A probability measure $\Pr$ assigned to
the measurable space $(\Omega, \mathcal{F}(\Omega))$
is defined as the probability space $(\Omega, \mathcal{F}(\Omega), \Pr)$.
When $\Omega \equiv \Re$,
the $\sigma$-algebra of $\Omega$ is denoted as $\mathscr{B}(\Omega)$,
and is the Borel $\sigma$-algebra associated with $\Omega$.
A random variable $\boldsymbol{x}$ is a measurable function
on the probability space $(\Omega, \mathcal{F}(\Omega), \Pr)$.
A random vector $\boldsymbol{x} = [\boldsymbol{x}_{1}, \ldots, \boldsymbol{x}_{n}]^{\top}$
of $n$ random variables $\lbrace \boldsymbol{x}_{i} \rbrace_{i=1}^{n}$, each measurable functions on the probability space $(\Omega, \mathcal{F}(\Omega), \Pr)$, is defined on the induced probability space
$(\Omega^{n}, \mathcal{F}(\Omega^{n}), \Pr_{\boldsymbol{x}})$,
where $\Pr_{\boldsymbol{x}}$ is the induced probability measure.
A stochastic process is defined as a sequence of random vectors
$\lbrace \boldsymbol{x}_{k} : k \in [0, N]\rbrace$, $N \in \mathbb{N}$,
where $\boldsymbol{x}_{k}$ are defined on the probability space
$(\Omega^{n}, \mathcal{F}(\Omega^{n}), \Pr_{\boldsymbol{x}})$.
See \cite{billingsley2008probability, chow2012probability} for more details.

The expectation operator is denoted as $\mathbb{E}[\,\cdot\,]$,
where for some function $f$,
$\mathbb{E}_{\boldsymbol{x} \sim \Pr_{\boldsymbol{x}}\lbrace\,\cdot\,\rbrace}[f(\boldsymbol{x})]$
denotes the expectation operator
with respect to the probability measure $\Pr_{\boldsymbol{x}}$.


\subsection{Terminal-Hitting Time Problem}
\label{section: system model}

Consider a Markov control process $\mathcal{H}$,
which is defined in \cite{summers2010verification} as a 3-tuple,
\begin{align}
	\mathcal{H} = ( \mathcal{X}, \mathcal{U}, Q )
	\label{eqn: Markov control process}
\end{align}
where $\mathcal{X} \subseteq \Re^{n}$ is the state space,
$\mathcal{U} \subseteq \Re^{m}$ is the control space,
and
$Q : \mathscr{B}(\mathcal{X}) \times{} \mathcal{X} \times{} \mathcal{U} \rightarrow [0, 1]$
is a stochastic kernel,
which is a Borel-measurable function
that maps a probability measure $Q(\,\cdot\, | \,x, u)$
to each $x \in \mathcal{X}$ and $u \in \mathcal{U}$
on the Borel space $(\mathcal{X}, \mathscr{B}(\mathcal{X}))$.
Further, let $\mathcal{X}$ and $\mathcal{U}$ be compact Borel spaces.
The system evolves over a finite horizon $k \in [0, N]$
with inputs chosen according to a Markov policy
\cite{puterman2014markov, bertsekas2004stochastic}, a sequence
$\pi = \lbrace \pi_{0}, \pi_{1}, \ldots, \pi_{N-1}\rbrace$
of universally-measurable maps
$\pi_{k} : \mathcal{X} \rightarrow \mathcal{U}$.
The set of all Markov control policies $\pi$ is denoted as $\mathcal{M}$.


We define $\mathcal{K}, \mathcal{T} \in \mathscr{B}(\mathcal{X})$
as the \emph{safe set} and \emph{target set}, respectively.
We define the
\emph{terminal-hitting time safety probability}
$r_{x_{0}}^{\pi}(\mathcal{K}, \mathcal{T})$
\cite{summers2010verification}
as the probability that a system $\mathcal{H}$
controlled by a policy $\pi \in \mathcal{M}$
will {reach} $\mathcal{T}$ at $k=N$
while {avoiding} $\mathcal{X}\backslash\mathcal{K}$
for all $k \in [0, N-1]$,
given an initial condition $x_{0} \in \mathcal{X}$.
\begin{align}
  r_{x_{0}}^{\pi}(\mathcal{K}, \mathcal{T}) \triangleq
    \Pr_{x_{0}}^{\pi}
    \lbrace
        \boldsymbol{x}_{N} \in \mathcal{T} \wedge{}
        \boldsymbol{x}_{i} \in \mathcal{K}, \forall i \in [0, N-1]
    \rbrace
  \label{eqn: terminal-hitting probability}
\end{align}
Let $V{}_{k}^{\pi} : \mathcal{X} \rightarrow [0, 1]$
be defined via backward recursion as
\begin{align}
  V{}_{N}^{\pi}(x) &=
	\boldsymbol{1}_{\mathcal{T}}(x)
	\label{eqn: terminal-hitting value N} \\
  V{}_{k}^{\pi}(x) &=
	\boldsymbol{1}_{\mathcal{K}}(x)
  \mathbb{E}_{\boldsymbol{y} \sim Q(\,\cdot\,|\,x, \pi_{k}(x))}
  \left[ V{}_{k+1}^{\pi}(\boldsymbol{y}) \right]
  \label{eqn: terminal-hitting value k}
\end{align}
then $V{}_{0}^{\pi}(x) = r_{x_{0}}^{\pi}(\mathcal{K}, \mathcal{T})$
for every $x_{0} \in \mathcal{X}$.

From \cite[Definition~10]{summers2010verification},
a policy $\pi^{*} \in \mathcal{M}$
is the \emph{maximal reach-avoid policy in the terminal sense}
if and only if 
\begin{align}
  r_{x_{0}}^{\pi^{*}}(\mathcal{K}, \mathcal{T}) =
  \sup_{\pi \in \mathcal{M}}
  \lbrace
    r_{x_{0}}^{\pi}(\mathcal{K}, \mathcal{T})
  \rbrace
\end{align}
Let $V{}_{k}^{*} : \mathcal{X} \rightarrow [0, 1]$, $k \in [0, N-1]$
be defined via backward recursion,
initialized with $V{}_{N}^{*}(x) = \boldsymbol{1}_{\mathcal{T}}(x)$, as
\begin{align}
  V{}_{k}^{*}(x) =
  \sup_{u \in \mathcal{U}}
  \left\lbrace
    \boldsymbol{1}_{\mathcal{K}}(x)
    \mathbb{E}_{\boldsymbol{y} \sim Q(\,\cdot\,|\,x,u)}
    \left[ V{}_{k+1}^{*}(\boldsymbol{y}) \right]
  \right\rbrace
  \label{eqn: terminal-hitting optimal value k}
\end{align}
Then, $V{}_{0}^{*}(x) = r_{x_{0}}^{\pi^{*}}(\mathcal{K}, \mathcal{T})$.
%
If $\pi^{*}$
is such that 
\begin{align}
  \pi_{k}^{*}(x) =
  \mathrm{arg}\sup_{u \in \mathcal{U}}
  \left\lbrace
    \boldsymbol{1}_{\mathcal{K}}(x)
    \mathbb{E}_{\boldsymbol{y} \sim Q(\,\cdot \,|\, x, u)}
    \left[ V{}_{k+1}^{*}(\boldsymbol{y}) \right]
  \right\rbrace
  \label{eqn: terminal-hitting optimal control map}
\end{align}
then $\pi^{*}$
is maximal in the terminal sense \cite[Theorem~11]{summers2010verification}.


\subsection{Problem Statement}

Consider a set $\mathcal{S}$ of $M$ samples
of the form $\mathcal{S} = \lbrace (\bar{y}_{i}, \bar{x}_{i}, \bar{u}_{i}) \rbrace_{i = 1}^M$ such that $y_{i}$ 
is drawn i.i.d. from $Q$
according to $\bar{y}_{i} \sim Q(\,\cdot \,|\, \bar{x}_{i}, \bar{u}_{i})$,
and $\bar{u}_{i} = \pi(\bar{x}_{i})$.
We denote sample vectors with a bar to differentiate them from time-indexed vectors.

\begin{problem}
Without direct knowledge of $Q$,
use samples $\mathcal{S}$
to construct a kernel-based approximation
of \eqref{eqn: terminal-hitting value k}
that converges in probability.
\end{problem}

\begin{problem}
Without direct knowledge of $Q$,
use samples $\mathcal{S}$
to construct a kernel-based approximation
of \eqref{eqn: terminal-hitting optimal value k}
that converges in probability,
in order to compute an approximation of the optimal policy
$\pi^{*}$.
\end{problem}

Using samples $\mathcal{S}$, we employ an approach similar to that in \cite{grunewalder2012modelling}, but that is specific to  the
dynamic program associated with the stochastic reachability problem
for high-dimensional, non-Gaussian systems.
The unique computational efficiencies
afforded by
reproducing kernel Hilbert spaces
transforms computation of
\eqref{eqn: terminal-hitting value k} and
\eqref{eqn: terminal-hitting optimal value k}
into simple matrix operations and inner products.


\section{Kernel Distribution Embeddings for Stochastic~Reachabilty}
\label{section: kernel mean embeddings}

For some set $\mathcal{X}$,
let $\mathscr{H}_{\mathcal{X}}$ denote the unique
reproducing kernel Hilbert space
\cite{scholkopf2001learning}
with the positive definite \cite[Definition~4.15]{steinwart2008support} kernel
$K_{\mathcal{X}} : \mathcal{X} \times{} \mathcal{X} \rightarrow \Re$,
which is a Hilbert space of real-valued functions on $\mathcal{X}$
with inner product
$\langle \,\cdot, \cdot\, \rangle_{\mathscr{H}_{\mathcal{X}}}$
and the induced norm
$\lVert x \rVert_{\mathscr{H}_{\mathcal{X}}} = \left(\langle x, x \rangle_{\mathscr{H}_{\mathcal{X}}} \right)^{1/2}$.
A reproducing kernel Hilbert space has two important properties
\cite{aronszajn1950theory}:
\begin{enumerate}
	\item For any $x, x' \in \mathcal{X}$,
				$K_{\mathcal{X}}(x, \cdot\,) : x' \rightarrow K_{\mathcal{X}}(x, x')$
				is an element of $\mathscr{H}_{\mathcal{X}}$.
	\item An element $K_{\mathcal{X}}(x, x')$ of $\mathscr{H}_{\mathcal{X}}$
				satisfies the \emph{reproducing property}
				such that $\forall f \in \mathscr{H}_{\mathcal{X}}$
				and $x \in \mathcal{X}$,
				\begin{align}
					f(x) &=
					\langle
						K_{\mathcal{X}}(x, \cdot), f(\cdot)
					\rangle_{\mathscr{H}_{\mathcal{X}}}
					\label{eqn: reproducing property} \\
					K_{\mathcal{X}}(x, x') &=
	\langle
		K_{\mathcal{X}}(x, \cdot),
		K_{\mathcal{X}}(x', \cdot)
	\rangle_{\mathscr{H}_{\mathcal{X}}}
				\end{align}
\end{enumerate}


This means that 
the evaluation of a function $f \in \mathscr{H}_{\mathcal{X}}$
can be viewed as an inner product
in $\mathscr{H}_{\mathcal{X}}$.
%
Alternatively, an element $K_{\mathcal{X}}(x, \cdot)$
can be viewed as a nonlinear feature map
$\phi : \mathcal{X} \rightarrow \mathscr{H}_{\mathcal{X}}$,
such that
\begin{align}
	K_{\mathcal{X}}(x, x') =
	\langle
		\phi(x),
		\phi(x')
	\rangle_{\mathscr{H}_{\mathcal{X}}}
\end{align}
Because constructing the feature map $\phi(\,\cdot\,)$
and computing
$\langle \phi(x), \phi(x') \rangle_{\mathscr{H}_{\mathcal{X}}}$
explicitly
can be computationally expensive,
the inner product can be computed using $K_{\mathcal{X}}(x, x')$ directly
for a $K_{\mathcal{X}}$ that is positive definite.
This is known as the \emph{kernel trick} \cite{shawe2004kernel}.

By choosing $K_{\mathcal{X}}$, we effectively choose a basis
to represent the functions in $\mathscr{H}_{\mathcal{X}}$.  With the reproducing property, we can then write
the function $f$ as $f(x) = w^{\top} \phi(x)$, a weighted sum of basis functions for some
possibly infinite-dimensional weight vector $w$.   We wish to solve for the particular $w$ which,
based on the samples $\mathcal S$, minimizes the difference between the observations and the kernel-based estimate.


Let $\mathscr{P}$ denote the set of all probability measures on $\mathcal{X}$.
The \emph{kernel distribution embedding}
\cite{berlinet2011reproducing, smola2007hilbert}
of a probability measure
$\Pr_{\boldsymbol{y}\,|\,x, u} \in \mathscr{P}$,
given by
$\mu_{(x, u)} : \mathscr{P} \rightarrow \mathscr{H}_{\mathcal{X}}$,
is defined as
\begin{align}
	\mu_{(x, u)} \triangleq
	\int_{\mathcal{X}}
	K_{\mathcal{X}}(y, \cdot)
	\Pr_{\boldsymbol{y}\,|\,x, u}
	\lbrace \mathrm{d} y \,|\, x, u \rbrace
\end{align}

Let $\mathscr{H}_{\mathcal{X}}$ denote
the unique RKHS
for the state space $\mathcal{X}$
with the positive definite kernel
$K_{\mathcal{X}} : \mathcal{X} \times{} \mathcal{X} \rightarrow \Re$.
Similarly, let $\mathscr{H}_{\mathcal{X} \times \mathcal{U}}$ denote
the RKHS
for $\mathcal{X} \times \mathcal{U}$
with the positive definite kernel
$K_{\mathcal{X} \times \mathcal{U}} : (\mathcal{X}, \mathcal{U}) \times{} (\mathcal{X}, \mathcal{U}) \rightarrow \Re$.
We define the \emph{conditional distribution embedding} of the stochastic kernel
$Q \in \mathscr{P}$ as $\mu_{(x, u)}$.
Then, the expectation of $f$ with respect to the probability measure $Q$ is given by
\begin{align}
	\langle \mu_{(x,u)}, f \rangle_{\mathscr{H}_{\mathcal{X}}} =
	\mathbb{E}_{\boldsymbol{y} \sim Q(\,\cdot\,|\,x, u)}
	[ f(\boldsymbol{y}) ]
	\label{eqn: conditional distribution embedding inner product}
\end{align}
This means we can evaluate the expectation of a function
with respect to $Q$ as an inner product
in $\mathscr{H}_{\mathcal{X}}$.

We can construct an estimate $\bar{\mu}_{(x, u)}$ of $\mu_{(x, u)}$
\cite{smola2007hilbert}
from samples $\mathcal{S}$
to approximate
\eqref{eqn: conditional distribution embedding inner product},
\begin{align}
	\langle \bar{\mu}_{(x, u)}, f \rangle_{\mathscr{H}_{\mathcal{X}}} \approx
	\mathbb{E}_{\boldsymbol{y} \sim Q(\,\cdot \,|\, x, u)}
	\left[
		f(\boldsymbol{y})
	\right]
	\label{eqn: conditional distribution embedding inner product estimate}
\end{align}
According to the Riesz representation theorem
\cite{micchelli2005learning},
the element $\bar{\mu}_{(x, u)}$
can be viewed as the solution to a regularized least-squares problem
that minimizes the error of the expectation operator over the samples \cite{micchelli2005learning, lever2012conditional},
\begin{align}
    \min_{\bar{\mu}} \left\lbrace
		\frac{1}{M}
    \sum_{i=1}^{M} \lVert K_{\mathcal{X}}(\bar{y}_{i}, \cdot) - \bar{\mu}_{(\bar{x}_{i}, \bar{u}_{i})} \rVert_{\mathscr{H}_{\mathcal{X}}}^{2} + \lambda \lVert \bar{\mu} \rVert_{\mathscr{H}_{\Upsilon}}^{2} \right\rbrace
		\label{eqn: regularized least-squares}
\end{align}
where $\mathscr{H}_{\Upsilon}$ is a vector-valued RKHS \cite{micchelli2005learning}.
The solution is unique and has the form
\begin{align}
	\bar{\mu}_{(x, u)} =
	\eta
	\Phi^{\top}
	(G + \lambda M I)^{-1}
	\Psi
	K_{\mathcal{X} \times \mathcal{U}}((x, u), \cdot)
	\label{eqn: conditional distribution embedding estimate}
\end{align}
where $\lambda$ is a regularization parameter to avoid overfitting,
$\eta$ is a normalizing constant,
and $\Phi$, $\Psi$, and $G$ are defined as
\begin{align}
	\Phi &= [
		K_{\mathcal{X}}(\bar{y}_{1}, \cdot),
		\ldots,
		K_{\mathcal{X}}(\bar{y}_{M}, \cdot)
	]^{\top}
	\label{eqn: phi vector} \\
	\Psi &= [
		K_{\mathcal{X} \times \mathcal{U}}((\bar{x}_{1}, \bar{u}_{1}), \cdot),
		\ldots,
		K_{\mathcal{X} \times \mathcal{U}}((\bar{x}_{M}, \bar{u}_{M}), \cdot)
	]^{\top}
	\label{eqn: psi vector} \\
	G &= \Psi \Psi^{\top}
	\label{eqn: gram matrix}
\end{align}
%
By the reproducing property
of $K_{\mathcal{X}}$ in $\mathscr{H}_{\mathcal{X}}$,
$\forall f \in \mathscr{H}_{\mathcal{X}}$,
we can rewrite
\eqref{eqn: conditional distribution embedding inner product estimate} as
\begin{align}
  \langle \bar{\mu}_{(x, u)}, f \rangle_{\mathscr{H}_{\mathcal{X}}} &=
		\boldsymbol{f}^{\top} \beta(x, u)
	\label{eqn: conditional distribution embedding inner product estimate as sum}
\end{align}
where $\boldsymbol{f} = [f(\bar{y}_{1}), \ldots, f(\bar{y}_{M})]^{\top}$,
and $\beta(x, u) \in \Re^{M}$ is a vector of coefficients,
\begin{align}
	\beta(x, u) &=
	\eta
	(G + \lambda M I)^{-1}
	\Psi
	K_{\mathcal{X} \times \mathcal{U}}((x, u), \cdot)
	\label{eqn: compute beta}
\end{align}
\emph{
This means an approximation of the value function expectation
$\mathbb{E}_{\boldsymbol{y} \sim Q(\,\cdot \,|\, x, \pi_{k}(x))} \left[ V{}_{k+1}^{\pi}(\boldsymbol{y}) \right]$
in \eqref{eqn: terminal-hitting value k}
can be evaluated as a linear operation
in $\mathscr{H}_{\mathcal{X}}$.
}


\subsection{Terminal-Hitting Time Problem}
\label{section: terminal-hitting time problem}

With the conditional distribution embedding $\mu_{(x, u)}$,
the value functions in \eqref{eqn: terminal-hitting value k}
can be written as
\begin{align}
	V{}_{k}^{\pi}(x) =
	\boldsymbol{1}_{\mathcal{K}}(x)
	\langle
		\mu_{(x, \pi_{k}(x))}, V{}_{k+1}^{\pi}
	\rangle_{\mathscr{H}_{\mathcal{X}}}
	\label{eqn: terminal-hitting value k as inner product}
\end{align}
With the estimate $\bar{\mu}_{(x, u)}$
\eqref{eqn: conditional distribution embedding estimate},
we define the approximate value functions
$\overbar{V}_{k}^{\pi} : \mathcal{X} \rightarrow [0, 1]$, $k \in [0, N-1]$, as
\begin{align}
	\overbar{V}_{k}^{\pi}(x) =
	\boldsymbol{1}_{\mathcal{K}}(x)
	\langle
		\bar{\mu}_{(x, \pi_{k}(x))}, V{}_{k+1}^{\pi}
	\rangle_{\mathscr{H}_{\mathcal{X}}}
	\label{eqn: terminal-hitting approximate value k as inner product}
\end{align}
such that $\overbar{V}_{k}^{\pi}(x) \approx	V{}_{k}^{\pi}(x)$.
Thus, an approximation for the reach-avoid probability
$r_{x_{0}}^{\pi}(\mathcal{K}, \mathcal{T})$
can be computed via the backward recursion
described in \algorithmref{algo: backward recursion},
such that
\begin{align}
	\overbar{V}_{0}^{\pi}(x)
	\approx r_{x_{0}}^{\pi}(\mathcal{K}, \mathcal{T})
	\label{eqn: terminal-hitting approximate value 0}
\end{align}

\begin{algorithm}[!t]
	\caption{Value Function Estimate}
	\label{algo: backward recursion}
	\textbf{Input}:
	samples $\mathcal{S}$ drawn i.i.d. from $Q$,
	policy $\pi$,
	horizon $N$
	\\
	\textbf{Output}:
	value function estimate $\overbar{V}_{0}^{\pi}(x)$

	\begin{algorithmic}[1]
		\State Compute $\Phi$ and $\Psi$ using $\mathcal{S}$ from
		\eqref{eqn: phi vector} and
		\eqref{eqn: psi vector}
		\State $G \gets \Psi \Psi^{\top}$ 
		\State Initialize $\overbar{V}_{N}^{\pi}(x) \gets \boldsymbol{1}_{\mathcal{T}}(x)$
		\For{$k \gets N-1$ to $0$}
			\State Compute $\beta(x, \pi_{k}(x))$ from \eqref{eqn: compute beta}
			\State $
            \mathcal{Y}
			\gets
			[\overbar{V}_{k+1}^{\pi}(\bar{y}_{1}), \ldots, \overbar{V}_{k+1}^{\pi}(\bar{y}_{M})]^{\top}$
			\State
				$
					\overbar{V}_{k}^{\pi}(x) \gets
						\boldsymbol{1}_{\mathcal{K}}(x)
				\mathcal{Y}^{\top}
							\beta(x, \pi_{k}(x))
				$
		\EndFor
		\State Return $\overbar{V}_{0}^{\pi}(x) \approx r_{x_{0}}^{\pi}(\mathcal{K}, \mathcal{T})$
	\end{algorithmic}

\end{algorithm}



We now seek to characterize
the quality of the approximation
and the conditions for its convergence.
As in \cite{sriperumbudur2010hilbert, gretton2007kernel, gretton2012kernel},
we define a pseudometric
that characterizes the accuracy of the estimate $\bar{\mu}_{(x,u)}$.
\begin{definition}[Distance Pseudometric]
The distance pseudometric in $\mathscr{H}_{\mathcal{X}}$ between
the conditional distribution embedding
$\mu_{(x, u)} \in \mathscr{H}_{\mathcal{X}}$
and the estimate
$\bar{\mu}_{(x, u)} \in \mathscr{H}_{\mathcal{X}}$ is defined as
$\lVert \mu_{(x, u)} - \bar{\mu}_{(x, u)} \rVert_{\mathscr{H}_{\mathcal{X}}}$.
\end{definition}

It is shown in \cite{fukumizu2008kernel}
that if $K_{\mathcal{X}}$ is a \emph{characteristic}, bounded kernel,
then
$\lVert \mu_{(x, u)} - \bar{\mu}_{(x, u)} \rVert_{\mathscr{H}_{\mathcal{X}}} = 0$
if and only if $\mu_{(x, u)} = \bar{\mu}_{(x, u)}$.
A kernel is characteristic if the kernel embedding is injective,
meaning the embeddings for any two different conditional distributions
are represented by different elements
within the RKHS.
Thus, as
$\lVert \mu_{(x, u)} - \bar{\mu}_{(x, u)} \rVert_{\mathscr{H}_{\mathcal{X}}}$
converges
\cite{grunewalder2012modelling, smola2007hilbert},
the estimate converges in probability
to the conditional distribution embedding
within $\mathscr{H}_{\mathcal{X}}$.

\begin{lemma}\cite[Lemma~2.2]{grunewalder2012modelling}
	\label{lemma: conditional distribution embedding convergence}
	For any $\varepsilon > 0$,
	if the regularization parameter $\lambda$ in
	\eqref{eqn: regularized least-squares}
	is chosen such that
	$\lambda \rightarrow 0$
	and $\lambda^{3} M \rightarrow \infty$, and if $\mathcal{X}$ is bounded and $K_{\mathcal{X}}$ is strictly positive definite, then
	\begin{align}
		\Pr_{\mathcal{S} \sim Q}
		\left\lbrace
			\sup_{(x, u) \in \mathcal{X} \times \mathcal{U}}
			\lVert
				\mu_{(x, u)} - \bar{\mu}_{(x, u)}
			\rVert_{\mathscr{H}_{\mathcal{X}}}
			> \varepsilon
		\right\rbrace \rightarrow 0
		\label{eqn: conditional distribution embedding convergence}
	\end{align}
\end{lemma}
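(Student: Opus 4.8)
The plan is to recast the estimate \eqref{eqn: conditional distribution embedding estimate} in operator form and split the embedding error $\lVert \mu_{(x,u)} - \bar{\mu}_{(x,u)}\rVert_{\mathscr{H}_{\mathcal{X}}}$ into a deterministic \emph{regularization} (bias) term and a stochastic \emph{estimation} (variance) term. Write $\psi(x,u) = K_{\mathcal{X}\times\mathcal{U}}((x,u),\cdot)$ and $\varphi(y) = K_{\mathcal{X}}(y,\cdot)$ for the input and output feature maps, and introduce the population covariance operators $C_{XX} = \mathbb{E}[\psi(\boldsymbol{x},\boldsymbol{u}) \otimes \psi(\boldsymbol{x},\boldsymbol{u})]$ and $C_{YX} = \mathbb{E}[\varphi(\boldsymbol{y}) \otimes \psi(\boldsymbol{x},\boldsymbol{u})]$, together with their empirical counterparts $\hat{C}_{XX}$, $\hat{C}_{YX}$ formed from $\mathcal{S}$. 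Then $\mu_{(x,u)} = C_{YX} C_{XX}^{-1}\psi(x,u)$, and, after identifying the Gram matrices in \eqref{eqn: conditional distribution embedding estimate}, the estimate is the empirical regularized operator $\bar{\mu}_{(x,u)} = \hat{C}_{YX}(\hat{C}_{XX}+\lambda I)^{-1}\psi(x,u)$ (the normalizing constant $\eta$ is immaterial to the argument). Inserting the regularized population operator $C_{YX}(C_{XX}+\lambda I)^{-1}$ gives
\begin{align*}
  \mu_{(x,u)} - \bar{\mu}_{(x,u)}
  &= \big[C_{YX} C_{XX}^{-1} - C_{YX}(C_{XX}+\lambda I)^{-1}\big]\psi(x,u) \\
  &\quad + \big[C_{YX}(C_{XX}+\lambda I)^{-1} - \hat{C}_{YX}(\hat{C}_{XX}+\lambda I)^{-1}\big]\psi(x,u)
\end{align*}

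First I would bound the bias term, which is deterministic. Writing it as $\lambda\, C_{YX}C_{XX}^{-1}(C_{XX}+\lambda I)^{-1}\psi(x,u)$ and invoking a source condition — that the target is regular enough for $\psi(x,u)$ to lie in the range of $C_{XX}^{1/2}$, which is where strict positive definiteness of $K_{\mathcal{X}}$ and invertibility of $C_{XX}$ enter — the spectral bound $\sup_{t\ge 0}\lambda t^{1/2}/(t+\lambda) \le \tfrac{1}{2}\lambda^{1/2}$ yields a bias of order $\lambda^{1/2}$, which vanishes as $\lambda\to 0$.

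Next I would control the variance term by concentration of the empirical operators. Because $\mathcal{X}$ is bounded and $\mathcal{U}$ is compact, $K_{\mathcal{X}}$ and $K_{\mathcal{X}\times\mathcal{U}}$ are bounded, so $\hat{C}_{XX}$ and $\hat{C}_{YX}$ are averages of bounded i.i.d. Hilbert--Schmidt terms and a Bernstein-type inequality in Hilbert space gives $\lVert\hat{C}_{XX}-C_{XX}\rVert, \lVert\hat{C}_{YX}-C_{YX}\rVert = O_p(M^{-1/2})$. Expanding the difference of regularized operators with the resolvent identity $(\hat{C}_{XX}+\lambda I)^{-1} - (C_{XX}+\lambda I)^{-1} = (\hat{C}_{XX}+\lambda I)^{-1}(C_{XX}-\hat{C}_{XX})(C_{XX}+\lambda I)^{-1}$ and bounding each resolvent by $\lambda^{-1}$ gives the naive estimate $O_p(\lambda^{-2}M^{-1/2})$; the sharper rate $O_p(\lambda^{-3/2}M^{-1/2})$ that underlies the hypothesis follows by absorbing one half power of a resolvent into the bounded factor $\lVert C_{YX}(C_{XX}+\lambda I)^{-1/2}\rVert \le \lVert C_{YX}C_{XX}^{-1/2}\rVert$ supplied by the source condition. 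Since $\lambda^{-3/2}M^{-1/2}\to 0$ is equivalent to $\lambda^3 M\to\infty$, this is exactly the stated scaling.

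Finally, I would pass to the uniform statement. Every bound above factors through $\lVert\psi(x,u)\rVert_{\mathscr{H}_{\mathcal{X}\times\mathcal{U}}} = K_{\mathcal{X}\times\mathcal{U}}((x,u),(x,u))^{1/2}$, which is uniformly bounded on the compact set $\mathcal{X}\times\mathcal{U}$; hence the operator-norm estimates bound $\sup_{(x,u)}\lVert\mu_{(x,u)}-\bar{\mu}_{(x,u)}\rVert_{\mathscr{H}_{\mathcal{X}}}$ by $O(\lambda^{1/2}) + O_p(\lambda^{-3/2}M^{-1/2})$, uniformly, with no covering-number argument needed since the $(x,u)$-dependence enters only through the bounded element $\psi(x,u)$. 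Taking $\lambda\to 0$ with $\lambda^3 M\to\infty$ drives both terms to zero, and converting the in-probability bound into a tail bound yields \eqref{eqn: conditional distribution embedding convergence}. I expect the main obstacle to be the variance analysis: proving the Hilbert-space concentration rigorously and, more delicately, extracting the sharp $\lambda^{-3/2}$ dependence rather than $\lambda^{-2}$, since it is precisely this exponent that forces the $\lambda^3 M\to\infty$ condition.
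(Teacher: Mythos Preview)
The paper does not prove this lemma: it is stated with a direct citation to \cite[Lemma~2.2]{grunewalder2012modelling} and no proof is given in the text. There is therefore nothing in the paper to compare your argument against beyond the bare statement.

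That said, your sketch is the standard operator-theoretic route one finds in the cited reference and its antecedents: rewrite the estimator as $\hat{C}_{YX}(\hat{C}_{XX}+\lambda I)^{-1}\psi(x,u)$, split off the regularized population operator to get a bias term controlled by $\lambda^{1/2}$ under a source condition, and bound the stochastic remainder via Hilbert--Schmidt concentration plus a resolvent identity to obtain $O_p(\lambda^{-3/2}M^{-1/2})$, which is exactly the rate that forces $\lambda^{3}M\to\infty$. The passage to the supremum over $(x,u)$ via the uniform bound on $\lVert\psi(x,u)\rVert$ is also the right observation. One caution: strict positive definiteness of $K_{\mathcal{X}}$ alone does not make $C_{XX}$ boundedly invertible on an infinite-dimensional RKHS, so the bias bound genuinely needs a range (source) assumption on the target operator rather than invertibility of $C_{XX}$; you allude to this but should be explicit that it is an assumption on the regression function, not a consequence of the kernel hypothesis. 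With that adjustment the outline is sound and matches what the cited lemma delivers.
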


\begin{proposition}[Value Function Convergence]
	\label{prop: value function expectation convergence}
	For any $\varepsilon > 0$,
	if the regularization prameter $\lambda$ in
	\eqref{eqn: regularized least-squares}
	is chosen such that
	$\lambda \rightarrow 0$
	and $\lambda^{3} M \rightarrow \infty$, and if $\mathcal{X}$ is bounded and $K_{\mathcal{X}}$ is strictly positive definite,
	$\vert V{}_{k}^{\pi}(x) - \overbar{V}_{k}^{\pi}(x) \vert$
	converges in probability.
\end{proposition}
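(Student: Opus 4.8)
The plan is to prove the claim by backward induction on the stage index $k$, reducing the value-function error to the uniform embedding error controlled by \lemmaref{lemma: conditional distribution embedding convergence}. Write $u = \pi_{k}(x)$ throughout, and recall that the approximate value functions are propagated recursively as in \algorithmref{algo: backward recursion}, so that $\overbar{V}_{k}^{\pi}(x) = \boldsymbol{1}_{\mathcal{K}}(x) \langle \bar{\mu}_{(x,u)}, \overbar{V}_{k+1}^{\pi} \rangle_{\mathscr{H}_{\mathcal{X}}}$. The base case $k = N$ is immediate, since $V{}_{N}^{\pi}(x) = \overbar{V}_{N}^{\pi}(x) = \boldsymbol{1}_{\mathcal{T}}(x)$, so the error is identically zero. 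For the inductive step I assume $\sup_{x \in \mathcal{X}} \vert V{}_{k+1}^{\pi}(x) - \overbar{V}_{k+1}^{\pi}(x) \vert \to 0$ in probability and establish the same at stage $k$ using \eqref{eqn: terminal-hitting value k as inner product}.

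The core estimate splits the stage-$k$ error into an \emph{embedding} part and a \emph{propagation} part. Since $\boldsymbol{1}_{\mathcal{K}}(x) \le 1$, adding and subtracting $\langle \bar{\mu}_{(x,u)}, V{}_{k+1}^{\pi} \rangle_{\mathscr{H}_{\mathcal{X}}}$ and applying the triangle inequality gives
\begin{align}
  \vert V{}_{k}^{\pi}(x) - \overbar{V}_{k}^{\pi}(x) \vert
  &\le
  \vert \langle \mu_{(x,u)} - \bar{\mu}_{(x,u)}, V{}_{k+1}^{\pi} \rangle_{\mathscr{H}_{\mathcal{X}}} \vert \nonumber \\
  &\quad +
  \vert \langle \bar{\mu}_{(x,u)}, V{}_{k+1}^{\pi} - \overbar{V}_{k+1}^{\pi} \rangle_{\mathscr{H}_{\mathcal{X}}} \vert .
  \label{eqn: proof split}
\end{align}
For the first term, Cauchy--Schwarz in $\mathscr{H}_{\mathcal{X}}$ bounds it by $\lVert \mu_{(x,u)} - \bar{\mu}_{(x,u)} \rVert_{\mathscr{H}_{\mathcal{X}}} \lVert V{}_{k+1}^{\pi} \rVert_{\mathscr{H}_{\mathcal{X}}}$; taking the supremum over $(x,u)$ and invoking \lemmaref{lemma: conditional distribution embedding convergence} drives this to zero in probability, provided $\lVert V{}_{k+1}^{\pi} \rVert_{\mathscr{H}_{\mathcal{X}}}$ is finite. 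Because there are only finitely many stages and each true value function is a fixed element of $\mathscr{H}_{\mathcal{X}}$, this norm is a uniform constant. For the second term I use the sample representation \eqref{eqn: conditional distribution embedding inner product estimate as sum}--\eqref{eqn: compute beta}, namely $\langle \bar{\mu}_{(x,u)}, g \rangle_{\mathscr{H}_{\mathcal{X}}} = \boldsymbol{g}^{\top} \beta(x,u)$ with $\boldsymbol{g} = [g(\bar{y}_{1}), \dots, g(\bar{y}_{M})]^{\top}$, so that the term is at most $\lVert \beta(x,u) \rVert_{1} \sup_{x} \vert V{}_{k+1}^{\pi}(x) - \overbar{V}_{k+1}^{\pi}(x) \vert$, which the inductive hypothesis sends to zero once $\sup_{(x,u)} \lVert \beta(x,u) \rVert_{1}$ is bounded.

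Combining the two pieces finishes the step: each summand in \eqref{eqn: proof split} converges to zero in probability uniformly in $x$, and subadditivity of $\Pr$ (a union bound over the events $\{\mathrm{term}_{i} > \varepsilon/2\}$) shows their sum does as well. Running the induction from $k = N$ down to $k = 0$ then yields convergence of $\vert V{}_{k}^{\pi}(x) - \overbar{V}_{k}^{\pi}(x) \vert$ at every stage, as claimed. Throughout I also use implicitly that $V{}_{k+1}^{\pi} \in \mathscr{H}_{\mathcal{X}}$, which is precisely what makes the inner-product identity \eqref{eqn: conditional distribution embedding inner product} equal to the expectation and legitimizes Cauchy--Schwarz; the indicator factor $\boldsymbol{1}_{\mathcal{K}}$ is harmless since it is bounded by one.

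The hard part will be the propagation term, specifically establishing that $\sup_{(x,u)} \lVert \beta(x,u) \rVert_{1}$ stays bounded as $\lambda \to 0$ and $M \to \infty$. Since $\beta(x,u)$ is built from $(G + \lambda M I)^{-1}$, a naive bound degrades as $\lambda \to 0$; the clean route is instead to argue that the functional $g \mapsto \langle \bar{\mu}_{(x,u)}, g \rangle_{\mathscr{H}_{\mathcal{X}}}$ converges, uniformly in $(x,u)$ by \lemmaref{lemma: conditional distribution embedding convergence}, to $g \mapsto \langle \mu_{(x,u)}, g \rangle_{\mathscr{H}_{\mathcal{X}}} = \mathbb{E}_{\boldsymbol{y} \sim Q(\,\cdot\,|\,x,u)}[g(\boldsymbol{y})]$, whose norm on bounded functions is at most one, so the approximating functional is eventually uniformly bounded with high probability. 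Making this uniform-boundedness argument rigorous, rather than resting on the pointwise value-function bound, is the technical crux; everything else is Cauchy--Schwarz, a union bound, and the finiteness of the horizon.
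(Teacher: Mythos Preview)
You have misread the definition of $\overbar{V}_{k}^{\pi}$ in \eqref{eqn: terminal-hitting approximate value k as inner product}: the paper defines
\[
\overbar{V}_{k}^{\pi}(x) = \boldsymbol{1}_{\mathcal{K}}(x)\,\langle \bar{\mu}_{(x,\pi_{k}(x))},\, V{}_{k+1}^{\pi}\rangle_{\mathscr{H}_{\mathcal{X}}},
\]
with the \emph{true} $V{}_{k+1}^{\pi}$ on the right, not the recursively propagated $\overbar{V}_{k+1}^{\pi}$. Under this one-step definition the difference $V{}_{k}^{\pi}(x)-\overbar{V}_{k}^{\pi}(x)$ is exactly $\boldsymbol{1}_{\mathcal{K}}(x)\,\langle \mu_{(x,\pi_{k}(x))}-\bar{\mu}_{(x,\pi_{k}(x))},\, V{}_{k+1}^{\pi}\rangle_{\mathscr{H}_{\mathcal{X}}}$, so the paper's entire proof is a single application of Cauchy--Schwarz followed by \lemmaref{lemma: conditional distribution embedding convergence}. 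There is no propagation term, no induction, and no need to control $\lVert \beta(x,u)\rVert_{1}$.

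Your decomposition \eqref{eqn: proof split} is therefore aimed at a different statement: the error of the \emph{recursive} scheme in \algorithmref{algo: backward recursion}, which the paper handles separately in \corollaryref{cor: value function expectation convergence cor1}. Your first (embedding) term is precisely the paper's complete argument for \propositionref{prop: value function expectation convergence}; your second (propagation) term, and the attendant worry about bounding $\sup_{(x,u)}\lVert \beta(x,u)\rVert_{1}$ as $\lambda\to 0$, belongs to the corollary. You are right that this is the delicate step, and in fact the paper's treatment of the corollary is informal on exactly this point: it asserts the per-step error is $\varepsilon$ and accumulates to $N\varepsilon$ without making explicit the uniform operator bound on $g\mapsto\langle \bar{\mu}_{(x,u)},g\rangle$ that your last paragraph correctly identifies as the crux.
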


\begin{proof}
	By subtracting
	\eqref{eqn: terminal-hitting approximate value k as inner product} from
	\eqref{eqn: terminal-hitting value k as inner product}, and using the parallelogram law, we define
	the absolute value function error $\mathcal{E}_{k}(x)$ at time $k$,
	\begin{align}
		\mathcal{E}_{k}(x) &\triangleq
		\vert
			V{}_{k}^{\pi}(x) - \overbar{V}_{k}^{\pi}(x)
		\vert \\
&= \boldsymbol{1}_{\mathcal{K}}(x)
		\vert
			\langle
				\mu_{(x,\pi_{k}(x))} - \bar{\mu}_{(x,\pi_{k}(x))}, V{}_{k+1}^{\pi}
			\rangle_{\mathscr{H}_{\mathcal{X}}}
		\vert
		\label{eqn: value function absolute difference}
	\end{align}
	We can rewrite
	\eqref{eqn: value function absolute difference}
	using Cauchy-Schwarz to obtain
	\begin{align}
		\mathcal{E}_{k}(x)
		&\leq \boldsymbol{1}_{\mathcal{K}}(x)
		\lVert
			V{}_{k+1}^{\pi}
		\rVert_{\mathscr{H}_{\mathcal{X}}}
		\lVert
			\mu_{(x,\pi_{k}(x))} - \bar{\mu}_{(x,\pi_{k}(x))}
		\rVert_{\mathscr{H}_{\mathcal{X}}}
	\end{align}
	Since
	$\lVert \mu_{(x,\pi_{k}(x))} - \bar{\mu}_{(x,\pi_{k}(x))} \rVert_{\mathscr{H}_{\mathcal{X}}}$
	converges in probability
	according to
	\lemmaref{lemma: conditional distribution embedding convergence},
	$\vert V{}_{k}^{\pi}(x) - \overbar{V}_{k}^{\pi}(x) \vert$
	also converges in probability
	with the probabilistic error bound $\varepsilon$.
\end{proof}
Using this, the value function approximation in
\eqref{eqn: terminal-hitting approximate value k as inner product}
converges in probability
for some probabilistic error bound $\varepsilon$
as the number of samples increases.
%

\begin{corollary}
	\label{cor: value function expectation convergence cor1}
	For any $\varepsilon > 0$,
	the error in the reach-avoid probability
	computed using \algorithmref{algo: backward recursion}
	converges in probability to
	\begin{align}
		\vert
			V{}_{0}^{\pi}(x) - \overbar{V}_{0}^{\pi}(x)
		\vert \leq N\varepsilon
	\end{align}
\end{corollary}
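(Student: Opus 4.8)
The plan is to promote the single-stage bound of \propositionref{prop: value function expectation convergence} to the full $N$-stage recursion of \algorithmref{algo: backward recursion} by a backward induction on $k$. The subtlety is that \propositionref{prop: value function expectation convergence} controls the error committed at one step under the assumption that the \emph{exact} successor value $V{}_{k+1}^{\pi}$ is supplied, whereas \algorithmref{algo: backward recursion} feeds the \emph{approximate} successor $\overbar{V}_{k+1}^{\pi}$ back into the recursion. The corollary must therefore separate the freshly introduced embedding error at stage $k$ from the error inherited from stages $k+1,\dots,N$, and show that the latter is not amplified as it propagates.

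First I would set $\mathcal{E}_{k}(x) = \vert V{}_{k}^{\pi}(x) - \overbar{V}_{k}^{\pi}(x)\vert$ and record the boundary condition $\mathcal{E}_{N}(x) = 0$, which holds because both recursions start from $\boldsymbol{1}_{\mathcal{T}}$. At a generic stage, adding and subtracting $\boldsymbol{1}_{\mathcal{K}}(x)\langle \bar{\mu}_{(x,\pi_{k}(x))}, V{}_{k+1}^{\pi}\rangle_{\mathscr{H}_{\mathcal{X}}}$ inside the absolute value and applying the triangle inequality gives
\begin{align*}
\mathcal{E}_{k}(x) &\leq \boldsymbol{1}_{\mathcal{K}}(x)\,\bigl\lvert \langle \mu_{(x,\pi_{k}(x))} - \bar{\mu}_{(x,\pi_{k}(x))}, V{}_{k+1}^{\pi}\rangle_{\mathscr{H}_{\mathcal{X}}}\bigr\rvert \\
&\quad + \boldsymbol{1}_{\mathcal{K}}(x)\,\bigl\lvert \langle \bar{\mu}_{(x,\pi_{k}(x))}, V{}_{k+1}^{\pi} - \overbar{V}_{k+1}^{\pi}\rangle_{\mathscr{H}_{\mathcal{X}}}\bigr\rvert .
\end{align*}
The first term is exactly the quantity bounded by $\varepsilon$ in probability through Cauchy--Schwarz and \lemmaref{lemma: conditional distribution embedding convergence}, as in \propositionref{prop: value function expectation convergence}; the second term is the error propagated from stage $k+1$.

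The hard part will be controlling this propagation term. By \eqref{eqn: conditional distribution embedding inner product estimate as sum} the inner product is a finite weighted sum of sample evaluations, $\langle \bar{\mu}_{(x,u)}, g\rangle_{\mathscr{H}_{\mathcal{X}}} = \boldsymbol{g}^{\top}\beta(x,u)$, so what I need is the non-expansiveness estimate $\bigl\lvert \boldsymbol{g}^{\top}\beta(x,u)\bigr\rvert \leq \sup_{i}\vert g(\bar{y}_{i})\vert$, which would dominate the propagation term by $\sup_{x}\mathcal{E}_{k+1}(x)$. Because $\bar{\mu}_{(x,u)}$ estimates a conditional probability measure, its action is an approximate conditional expectation, and for value functions valued in $[0,1]$ such an averaging operation is non-expansive in the supremum norm provided the weights $\beta(x,u)$ are nonnegative and sum to one. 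To make this rigorous I would invoke the convergence $\bar{\mu}_{(x,u)} \to \mu_{(x,u)}$ from \lemmaref{lemma: conditional distribution embedding convergence} under a characteristic, bounded kernel, so that $\beta(x,u)$ is asymptotically a proper probability weight vector.

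Granting the non-expansiveness, the two terms combine into the scalar recursion $\sup_{x}\mathcal{E}_{k}(x) \leq \varepsilon + \sup_{x}\mathcal{E}_{k+1}(x)$. A backward induction from $\mathcal{E}_{N}\equiv 0$ then accumulates one factor of $\varepsilon$ per stage, giving $\sup_{x}\mathcal{E}_{0}(x) \leq N\varepsilon$, i.e. $\vert V{}_{0}^{\pi}(x) - \overbar{V}_{0}^{\pi}(x)\vert \leq N\varepsilon$, which converges in probability as the sample size $M$ grows.
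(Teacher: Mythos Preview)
Your proposal follows essentially the same backward-induction strategy as the paper: bound the one-step error by $\varepsilon$ via \propositionref{prop: value function expectation convergence}, then argue that each additional stage of the recursion in \algorithmref{algo: backward recursion} contributes at most another $\varepsilon$, yielding the cumulative bound $N\varepsilon$ at $k=0$. Your triangle-inequality split into a fresh embedding error and a propagated error, together with the non-expansiveness discussion for the second term, makes explicit the induction step that the paper's proof states in a single line (``by approximating and recursively substituting $\overbar{V}_{k}^{\pi}(x)$ for $k<N-1$, the error at time $k$ converges in probability to $(N-k)\varepsilon$''); the paper does not spell out the contraction/non-expansiveness argument you raise.
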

\begin{proof}
	By subtracting
	\eqref{eqn: terminal-hitting approximate value k as inner product} from
	\eqref{eqn: terminal-hitting value k as inner product}, we obtain
	the absolute value function error $\mathcal{E}_{N-1}(x)$ at time $k = N-1$,
	\begin{align}
		\mathcal{E}_{N-1}(x) =
		\vert
			V{}_{N-1}^{\pi}(x) - \overbar{V}_{N-1}^{\pi}(x)
		\vert
		\label{eqn: appendix cor1 value function k error}
	\end{align}
	Using \propositionref{prop: value function expectation convergence},
    if the error in the approximate value function converges in probability to at most $\varepsilon$,
	then the error in \eqref{eqn: appendix cor1 value function k error} converges in probability to $\varepsilon$,
	i.e. $\mathcal{E}_{N-1}(x) \leq \varepsilon$.

	Because the error
	in the approximate value function
	for $k = N-1$
	converges in probability to $\varepsilon$,
	then by approximating and recursively substituting
	$\overbar{V}_{k}^{\pi}(x)$ for $k < N-1$,
	the error at time $k$ converges in probability to $(N-k)\varepsilon$.
	Thus, by induction, the error
	obtained by the backward recursion in
	\algorithmref{algo: backward recursion}
	converges in probability to at most $N\varepsilon$,
	\begin{align}
		\vert
			V{}_{0}^{\pi}(x) - \overbar{V}_{0}^{\pi}(x)
		\vert \leq N\varepsilon
	\end{align}
	which concludes the proof.
\end{proof}



\subsection{Maximal Reach-Avoid Policy in the Terminal Sense}

As in \eqref{eqn: terminal-hitting value k as inner product},
we write the optimal value functions $V{}_{k}^{*}$
from \eqref{eqn: terminal-hitting optimal value k}
using the conditional distribution embedding $\mu_{(x, u)}$.
\begin{align}
	V{}_{k}^{*}(x) =
	\sup_{u \in \mathcal{U}}
	\left\lbrace
		\boldsymbol{1}_{\mathcal{K}}(x)
		\langle
			\mu_{(x, u)}, V{}_{k+1}^{*}
		\rangle_{\mathscr{H}_{\mathcal{X}}}
	\right\rbrace
	\label{eqn: terminal-hitting optimal value k as inner product}
\end{align}
With the estimate $\bar{\mu}_{(x, u)}$
from \eqref{eqn: conditional distribution embedding estimate},
we define the approximate optimal value functions
$\overbar{V}_{k}^{*} : \mathcal{X} \rightarrow [0, 1]$, $k \in [0, N-1]$
\begin{align}
	\overbar{V}_{k}^{*}(x) =
	\sup_{u \in \mathcal{U}}
	\left\lbrace
	\boldsymbol{1}_{\mathcal{K}}(x)
	\langle
		\bar{\mu}_{(x, u)}, V{}_{k+1}^{*}
	\rangle_{\mathscr{H}_{\mathcal{X}}}
	\right\rbrace
	\label{eqn: terminal-hitting approximate optimal value k as inner product}
\end{align}
such that $\overbar{V}_{k}^{*}(x) \approx V{}_{k}^{*}(x)$.
%
%
If
$\bar{\pi}_{k}^{*} : \mathcal{X} \rightarrow \mathcal{U}$
is such that
\begin{align}
	\bar{\pi}_{k}^{*}(x) =
	\mathrm{arg}\sup_{u \in \mathcal{U}}
	\left\lbrace
		\boldsymbol{1}_{\mathcal{K}}(x)
		\langle
			\bar{\mu}_{(x, u)}, V{}_{k+1}^{*}
		\rangle_{\mathscr{H}_{\mathcal{X}}}
	\right\rbrace
	\label{eqn: approximate maximal reach-avoid policy in the terminal sense}
\end{align}
then
$\bar{\pi}^{*} = \lbrace \bar{\pi}_{0}^{*}, \bar{\pi}_{1}^{*}, \ldots \rbrace$
is the approximate maximal reach-avoid policy in the terminal sense.
The approximate optimal reach-avoid probability
under policy $\bar{\pi}^{*}$
initialized with $\overbar{V}_{k}^{*}(x) = \boldsymbol{1}_{\mathcal{T}}(x)$,
is described in
\algorithmref{algo: backward recursion} as
\begin{align}
  r_{x_{0}}^{*}(\mathcal{K}, \mathcal{T}) \approx
	\overbar{V}_{0}^{*}(x).
	\label{eqn: terminal-hitting approximate optimal value 0}
\end{align}


\section{Numerical Results}
\label{section: numerical results}

\begin{table*}[ht!]
	\caption{Computation Time and Parameters}
	\label{table: computation time}
	\centering
	\normalsize
	\renewcommand{\arraystretch}{1.1}
	\begin{tabular*}{\textwidth}{
		@{\extracolsep{\fill}} l
		@{\hspace{0.5\tabcolsep}} c
		@{\hspace{0.5\tabcolsep}} c
		@{\hspace{0.5\tabcolsep}} c
		@{\hspace{0.5\tabcolsep}} c
		@{\hspace{0.5\tabcolsep}} c
		}
		\toprule
		& Number of
		& Number of
		&  
		& Dynamic 
		& Chance-Constrained 
		\\
		  System
		& Samples [$M$]
		& Evaluation Points
		& \algorithmref{algo: backward recursion} 
		& Programming 
		& Open \cite{vinod2019affine} 
		\\
		\midrule
		Double Integrator
		& $1024$
		& $10201$
		& $0.43$ s
		& $31.76$ s
		& $24.36$ s
		\\
		CWH
		& $883$
		& $10201$
		& $0.48$ s
		& --
		& $34.51$ s
		\\
		\midrule
		10000-D Integrator
		& $1024$
		& $1$
		& $30.53$ s
		& --
		& --
		\\
		\bottomrule
	\end{tabular*}
\end{table*}

%

We implemented \algorithmref{algo: backward recursion} on two
well-known stochastic systems for the purpose of validation and error analysis.
We first generated $M = 1024$ samples via simulation, 
then presumed no knowledge of the system dynamics
or the stochastic disturbance in computing
$r_{x_{0}}^{\pi}(\mathcal{K}, \mathcal{T})$ via Algorithm \ref{algo: backward recursion}.
%
%
We chose a 
Gaussian radial basis function kernel,
 $K(x, x') = \exp ( - \lVert x - x' \rVert_{2}^{2} / 2\sigma^2 )$,
with $\sigma = 0.1$,
and regularization parameter $\lambda = 1$.
All computations were done in Matlab on a
2.7GHz Intel Core i7 CPU with 16 GB RAM.
Code to generate all figures is available at
\textit{https://github.com/unm-hscl/ajthor-LCSS-2019}.


\subsection{$n$-D Stochastic Chain of Integrators}

We consider a $n$-D stochastic chain of integrators
\cite{vinod2017scalable}, in which the input appears at the $n^{\mathrm{th}}$ derivative
and each element of the state vector is the discretized integral of the element that follows it. The dynamics with sampling time $T$ are given by:
\begin{align}
    \boldsymbol{x}_{k+1} =
    \begin{bmatrix}
      1 & T & \cdots & \frac{T^{n-1}}{(n-1)!} \\
      0 & 1 & \cdots & \frac{T^{n-2}}{(n-2)!} \\
      \vdots & \vdots & \ddots & \vdots \\
      0 & 0 & 0 & 1
    \end{bmatrix}
    \boldsymbol{x}_{k} +
    \begin{bmatrix}
      \frac{T^{n}}{n!} \\
      \frac{T^{n-1}}{(n-1)!} \\
      \vdots \\
      T
    \end{bmatrix}
    u_{k} +
    \boldsymbol{w}_{k}
  \end{align}
with i.i.d. disturbance $\boldsymbol{w}_{k}$
defined on the probability space
$(\mathcal{W}, \mathscr{B}(\mathcal{W}), \Pr_{\boldsymbol{w}})$.
We consider two distributions: 
%
1) a Gaussian distribution with variance $\Sigma = 0.01I$
such that $\boldsymbol{w}_{k} \sim \mathcal{N}(0, \Sigma)$, and
2) a beta distribution such that $\boldsymbol{w}_{k} \sim \textnormal{Beta}(\alpha, \beta)$, with PDF
	$f(x \,|\, \alpha, \beta) =
	\frac{
		\Gamma(\alpha + \beta)
	}{
		\Gamma(\alpha) \Gamma(\beta)
	}
	x^{\alpha-1} (1-x)^{\beta-1}$,
described in terms of Gamma function $\Gamma$ and
positive shape parameters $\alpha = 0.5$, $\beta = 0.5$.
%
The control policy $\pi$ is
$\pi_{0}(x) = \pi_{1}(x) = \ldots = 0$.
The target set and safe set are $\mathcal{T}, \mathcal{K} = [-1, 1]^{n}$.

For a system with $n=2$, we compute
approximate safety probabilities
using \algorithmref{algo: backward recursion}
for time horizon $N=3$ with the Gaussian distribution in Fig. \ref{fig: main_figure_1}(a) and
the beta
distribution in Fig. \ref{fig: main_figure_2}(a).
We compared the RKHS solution for the Gaussian distribution with a dynamic programming solution
implemented in \cite{vinod2019sreachtools}.
The absolute error (\ref{eqn: value function absolute difference})
(Fig. \ref{fig: main_figure_1}(b)) has a maximum value of 0.074.
We consider the region strictly within $\mathcal{K}$ to account for Matlab rounding errors.
The error is highest along the ridges on the upper right and lower left corners.
Fig. \ref{fig: dimensionality_time} shows that as the number of samples increases, the error decreases, as expected.
%

%

For a 10,000-dimensional system, which is beyond the
computational capabilities of any existing methods for stochastic reachability,
computation of \eqref{eqn: terminal-hitting approximate value 0} took 30.53 seconds for $x_0 = 0$ (Table \ref{table: computation time}).
Computation time, evaluated from the
the same initial condition for all systems of dimension 2 through 10,000 (Fig. \ref{fig: dimensionality_time}),
%
appears to increase linearly
because computation of the norm in the Gaussian kernel function
scales linearly with state dimension.
Note that the structure of the system dynamics plays no role in the computational complexity,
as the structure of $G$ in Algorithm~\ref{algo: backward recursion} does not depend on the dynamics.

\begin{figure}[t!]
    \centering
    \includegraphics{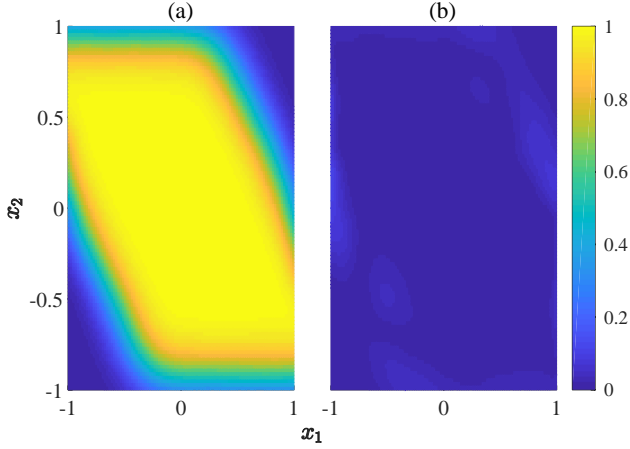}
    \caption{
    (a) Approximate safety probabilities computed using \algorithmref{algo: backward recursion} for a double integrator at $k=0$ for $N=3$.
	(b) Absolute error $\vert V_{0}^{\pi}(x) - \overbar{V}_{0}^{\pi}(x) \vert$
	between the dynamic programming solution
	and \algorithmref{algo: backward recursion}
	at $k=0$ for $N=3$.}
    \label{fig: main_figure_1}
\end{figure}

\begin{figure}[!t]
    \centering
	\includegraphics{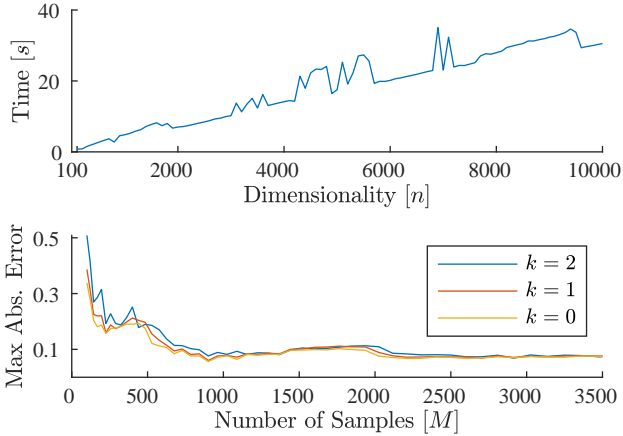}
	\caption{
		(top) System dimensionality [$n$] vs. average computation time [$s$] for an $n$-D stochastic integrator system.
		(bottom) Number of samples [$M$] vs. maximum absolute error $\vert V_{0}^{\pi}(x) - \overbar{V}_{0}^{\pi}(x) \vert$ for time steps $k = 2$ to $k = 0$ for $N = 3$.
	}
	\label{fig: dimensionality_time}
\end{figure}



\subsection{Clohessy-Wiltshire-Hill System}

Lastly, we considered the more realistic example of
spacecraft rendezvous and docking, in which
a spacecraft must dock with another spacecraft
while remaining within a line of sight cone.
The Clohessey-Wiltshire-Hill dynamics,
%
\begin{align}
		\ddot{x} - 3\omega x - 2 \omega \dot{y} &= F_{x}/m_{d} &
		\ddot{y} + 2 \omega \dot{x} &= F_{y}/m_{d}
\end{align}
with state $\boldsymbol{z} = [x, y, \dot{x}, \dot{y}] \in \mathcal{X} \subseteq \Re^{4}$,
 input
 $u = [F_{x}, F_{y}] \in \mathcal{U} \subseteq \Re^{2}$,
where $\mathcal{U} = [-0.1, 0.1] \times [-0.1, 0.1]$, and parameters $\omega$, $m_d$
can be written as a discrete-time LTI system
$\boldsymbol{z}_{k+1} = A \boldsymbol{z}_{k} + B u_{k} + \boldsymbol{w}_{k}$
with an additive Gaussian disturbance \cite{lesser2013stochastic}
%
%
with variance $\Sigma = \textnormal{diag}(1 \times 10^{-4}, 1 \times 10^{-4}, 5 \times 10^{-8}, 5 \times 10^{-8})$
such that $\boldsymbol{w}_{k} \sim \mathcal{N}(0, \Sigma)$.
The target set and safe set are defined
as in \cite{lesser2013stochastic}:
\begin{align}
	\mathcal{T} &=
	\lbrace
		\boldsymbol{z} \in \Re^{4} :
		\begin{aligned}[t]
				& \vert \boldsymbol{z}_{1} \vert \leq 0.1,
					-0.1 < \boldsymbol{z}_{2} < 0, \\
				& \vert \boldsymbol{z}_{3} \vert \leq 0.01,
					\vert \boldsymbol{z}_{4} \vert \leq 0.01 \rbrace
		\end{aligned} \\
	\mathcal{K} &=
	\lbrace
		\boldsymbol{z} \in \Re^{4} 	: \vert \boldsymbol{z}_{1} \vert < \vert \boldsymbol{z}_{2} \vert,
											\vert \boldsymbol{z}_{3} \vert \leq 0.05,
											\vert \boldsymbol{z}_{4} \vert \leq 0.05
	\rbrace
\end{align}
We generate samples using \cite{vinod2019sreachtools}
with a chance-affine controller.

Fig. \ref{fig: main_figure_2}(b) shows the approximate safety probabilities for time horizon $N = 5$, with a precomputed safety controller from \cite{vinod2019affine, vinod2019sreachtools}.
The safety probabilities for the entire region were computed in 0.48 seconds (Table \ref{table: computation time}), almost two orders of magnitude less than the chance constrained approach \cite{vinod2019affine}, which computes the set of initial conditions
where the safety probability is above a certain threshold (0.8 in this case).

\begin{figure}[t!]
    \centering
    \includegraphics{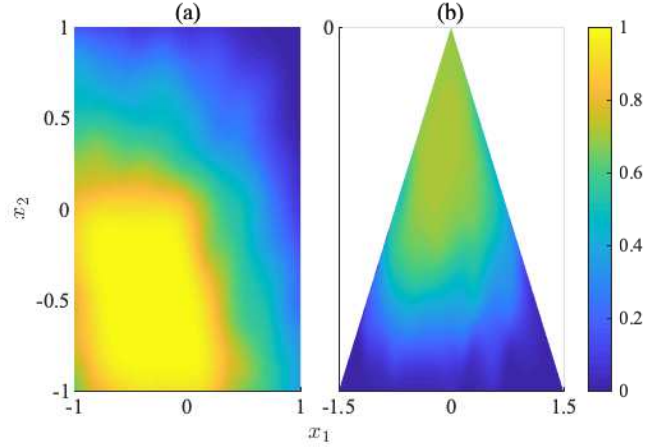}
    \caption{(a) Approximate safety probabilities computed using \algorithmref{algo: backward recursion} for a double integrator
	with a
	$\textnormal{Beta}(0.5, 0.5)$
	disturbance at $k=0$ for $N=1$.
	(b) Terminal-hitting safety probabilities for a CWH system
	with a Gaussian disturbance
	and a chance-affine controller
	at $k=0$ for $N=5$.}
    \label{fig: main_figure_2}
\end{figure}


\subsection{Sample Size and Parameter Tuning}

The number of samples used to create the estimate $\bar{\mu}_{(x, u)}$
is the most significant computational bottleneck for
\algorithmref{algo: backward recursion} and is generally $\mathcal{O}(M^3)$.
%
As the number of samples
increases,
the absolute error in the approximate safety probabilities decreases.
However, methods have been developed recently to
reduce the computational complexity \cite{rahimi2008random, le2013fastfood, lever2012conditional}
to $\mathcal{O}(M \log M)$.
Additionally, for high-dimensional systems,
the number of samples needed to fully characterize the
system dynamics and disturbance
increases as the system dimensionality increases,
which is prohibitive for analysis over large regions of the state space.
However, due to the sample-based nature of
\algorithmref{algo: backward recursion}, we can choose samples
within a local region of interest
in order to approximate the safety probabilities.

The kernel bandwidth parameter and the regularization parameter
are tunable parameters which can affect
the quality of the estimate obtained
using \algorithmref{algo: backward recursion}.
A cross-validation scheme to empirically
choose these parameters is presented in
\cite[Section~6]{micchelli2005learning}.

\section{Conclusions \& Future Work}

We present a sample-based method to compute the stochastic reachability probability measure
for Markov control systems
with arbitrary disturbances that does not require a known model of the transition kernel.
Our approach employs efficient computation associated with a reproducing kernel Hilbert space
to approximate conditional distributions via simple matrix operations.
The method is demonstrated on a 10,000-dimensional integrator as well as a realistic model of
relative spacecraft motion.
We plan to extend this
to sample-based controller synthesis, with application to
systems with autonomous and human elements.


\bibliographystyle{IEEEtran}
\bibliography{bibliography}

\begin{thebibliography}{10}
\providecommand{\url}[1]{#1}
\csname url@samestyle\endcsname
\providecommand{\newblock}{\relax}
\providecommand{\bibinfo}[2]{#2}
\providecommand{\BIBentrySTDinterwordspacing}{\spaceskip=0pt\relax}
\providecommand{\BIBentryALTinterwordstretchfactor}{4}
\providecommand{\BIBentryALTinterwordspacing}{\spaceskip=\fontdimen2\font plus
\BIBentryALTinterwordstretchfactor\fontdimen3\font minus
  \fontdimen4\font\relax}
\providecommand{\BIBforeignlanguage}[2]{{%
\expandafter\ifx\csname l@#1\endcsname\relax
\typeout{** WARNING: IEEEtran.bst: No hyphenation pattern has been}%
\typeout{** loaded for the language `#1'. Using the pattern for}%
\typeout{** the default language instead.}%
\else
\language=\csname l@#1\endcsname
\fi
#2}}
\providecommand{\BIBdecl}{\relax}
\BIBdecl

\bibitem{summers2010verification}
S.~Summers and J.~Lygeros, ``Verification of discrete time stochastic hybrid
  systems: A stochastic reach-avoid decision problem,'' \emph{Automatica},
  vol.~46, no.~12, pp. 1951--1961, Dec. 2010.

\bibitem{abate2008probabilistic}
A.~Abate, M.~Prandini, J.~Lygeros, and S.~Sastry, ``Probabilistic reachability
  and safety for controlled discrete time stochastic hybrid systems,''
  \emph{Automatica}, vol.~44, no.~11, pp. 2724--2734, November 2008.

\bibitem{lesser2013stochastic}
K.~{Lesser}, M.~{Oishi}, and R.~{Erwin}, ``Stochastic reachability for control
  of spacecraft relative motion,'' in \emph{{IEEE} Conf. Dec. \& Ctrl.}, Dec
  2013, pp. 4705--4712.

\bibitem{vinod2019piecewise}
A.~Vinod, V.~Sivaramakrishnan, and M.~Oishi, ``Piecewise-affine
  approximation-based stochastic optimal control with {Gaussian} joint chance
  constraints,'' in \emph{Amer. Ctl. Conf.}, 2019.

\bibitem{sartipizadeh2018voronoi}
H.~Sartipizadeh, A.~Vinod, B.~Acikmese, and M.~Oishi, ``{Voronoi}
  partition-based scenario reduction for fast sampling-based stochastic
  reachability computation of {LTI} systems,'' \emph{Amer. Ctrl. Conf.}, pp.
  37--44, 2018.

\bibitem{vinod2018multiple}
A.~Vinod, B.~HomChaudhuri, C.~Hintz, A.~Parikh, S.~Buerger, M.~Oishi,
  G.~Brunson, S.~Ahmad, and R.~Fierro, ``Multiple pursuer-based intercept via
  forward stochastic reachability,'' in \emph{Amer. Ctrl. Conf.}, 2018, pp.
  1559--1566.

\bibitem{vinod2018stochastic}
A.~Vinod, S.~Rice, Y.~Mao, M.~Oishi, and B.~A{\c{c}}{\i}kme{\c{s}}e,
  ``Stochastic motion planning using successive convexification and
  probabilistic occupancy functions,'' in \emph{{IEEE} Conf. Dec. \& Ctrl.},
  2018, pp. 4425--4432.

\bibitem{vinod2018scalable}
A.~Vinod and M.~Oishi, ``Scalable underapproximative verification of stochastic
  {LTI} systems using convexity and compactness,'' in \emph{Hybrid Sys.: Comp.
  and Ctrl.}, 2018, pp. 1--10.

\bibitem{vinod2017scalable}
A.~{Vinod} and M.~{Oishi}, ``Scalable underapproximation for the stochastic
  reach-avoid problem for high dimensional {LTI} systems using {Fourier}
  transforms,'' \emph{{IEEE} Ctrl. Syst. Letters}, vol.~1, no.~2, pp. 316--321,
  2017.

\bibitem{kariotoglou2013approximate}
N.~Kariotoglou, S.~Summers, T.~Summers, M.~Kamgarpour, and J.~Lygeros,
  ``Approximate dynamic programming for stochastic reachability,'' in
  \emph{Euro. Ctrl. Conf.}, 2013, pp. 584--589.

\bibitem{manganini2015policy}
G.~Manganini, M.~Pirotta, M.~Restelli, L.~Piroddi, and M.~Prandini, ``Policy
  search for the optimal control of {Markov} decision processes: A novel
  particle-based iterative scheme,'' \emph{{IEEE} Trans. on Cybern.}, vol.~46,
  no.~11, pp. 2643--2655, 2015.

\bibitem{mitchell2005time}
I.~Mitchell, A.~Bayen, and C.~Tomlin, ``A time-dependent {Hamilton-Jacobi}
  formulation of reachable sets for continuous dynamic games,'' \emph{{IEEE}
  Trans. Autom. Ctrl.}, vol.~50, no.~7, pp. 947--957, 2005.

\bibitem{fisac2015reach}
J.~Fisac, M.~Chen, C.~Tomlin, and S.~Sastry, ``Reach-avoid problems with
  time-varying dynamics, targets and constraints,'' in \emph{Hybrid Syst.:
  Comput. and Ctrl.}, 2015, pp. 11--20.

\bibitem{bellman2015applied}
R.~Bellman and S.~Dreyfus, \emph{Applied Dynamic Programming}.\hskip 1em plus
  0.5em minus 0.4em\relax Princeton University Press, 1962.

\bibitem{scholkopf2001learning}
B.~Sch{\"o}lkopf and A.~Smola, \emph{Learning with kernels: Support vector
  machines, regularization, optimization, and beyond}.\hskip 1em plus 0.5em
  minus 0.4em\relax Cambridge, MA: MIT Press, 2002.

\bibitem{shawe2004kernel}
J.~Shawe-Taylor and N.~Cristianini, \emph{Kernel Methods for Pattern
  Analysis}.\hskip 1em plus 0.5em minus 0.4em\relax New York, NY: Cambridge
  University Press, 2004.

\bibitem{aronszajn1950theory}
N.~Aronszajn, ``Theory of reproducing kernels,'' \emph{Trans. of the Amer.
  Math. Soc.}, vol.~68, no.~3, pp. 337--404, 1950.

\bibitem{grunewalder2012modelling}
S.~Gr{\"u}new{\"a}lder, G.~Lever, L.~Baldassarre, M.~Pontil, and A.~Gretton,
  ``Modelling transition dynamics in {MDP}s with {RKHS} embeddings,'' in
  \emph{Int'l Conf. Mach. Learn.}, 2012, pp. 535--542.

\bibitem{song2013kernel}
L.~{Song}, K.~{Fukumizu}, and A.~{Gretton}, ``Kernel embeddings of conditional
  distributions: A unified kernel framework for nonparametric inference in
  graphical models,'' \emph{{IEEE} Signal Process. Mag.}, vol.~30, pp. 98--111,
  Jul. 2013.

\bibitem{smola2007hilbert}
A.~Smola, A.~Gretton, L.~Song, and B.~Sch{\"o}lkopf, ``A {Hilbert} space
  embedding for distributions,'' in \emph{Int'l Conf. Algorithmic Learn.
  Theory}, 2007, pp. 13--31.

\bibitem{lever2015modelling}
G.~Lever and R.~Stafford, ``Modelling policies in mdps in reproducing kernel
  hilbert space,'' in \emph{Artif. Intell. and Stats.}, 2015, pp. 590--598.

\bibitem{nishiyama2012hilbert}
Y.~Nishiyama, A.~Boularias, A.~Gretton, and K.~Fukumizu, ``{Hilbert} space
  embeddings of {POMDPs},'' in \emph{Uncertainty Artif. Intell.}, 2012, pp.
  644--653.

\bibitem{song2009hilbert}
L.~Song, J.~Huang, A.~Smola, and K.~Fukumizu, ``{Hilbert} space embeddings of
  conditional distributions with applications to dynamical systems,'' in
  \emph{Int'l Conf. Mach. Learn.}, 2009, pp. 961--968.

\bibitem{billingsley2008probability}
P.~Billingsley, \emph{Probability and Measure}.\hskip 1em plus 0.5em minus
  0.4em\relax John Wiley \& Sons, 2012.

\bibitem{chow2012probability}
Y.~Chow and H.~Teicher, \emph{Probability Theory: Independence,
  Interchangeability, Martingales}.\hskip 1em plus 0.5em minus 0.4em\relax
  Springer, 2012.

\bibitem{puterman2014markov}
M.~Puterman, \emph{{Markov} Decision Processes: Discrete Stochastic Dynamic
  Programming}.\hskip 1em plus 0.5em minus 0.4em\relax John Wiley \& Sons,
  2014.

\bibitem{bertsekas2004stochastic}
D.~Bertsekas and S.~Shreve, \emph{Stochastic optimal control: The discrete-time
  case}.\hskip 1em plus 0.5em minus 0.4em\relax Athena Scientific Publ., 1996.

\bibitem{steinwart2008support}
I.~Steinwart and A.~Christmann, \emph{Support vector machines}.\hskip 1em plus
  0.5em minus 0.4em\relax Springer, 2008.

\bibitem{berlinet2011reproducing}
A.~Berlinet and C.~Thomas-Agnan, \emph{Reproducing Kernel {Hilbert} Spaces in
  Probability and Statistics}.\hskip 1em plus 0.5em minus 0.4em\relax Springer,
  2011.

\bibitem{micchelli2005learning}
C.~{Micchelli} and M.~{Pontil}, ``On learning vector-valued functions,''
  \emph{Neural Comput.}, vol.~17, no.~1, pp. 177--204, 2005.

\bibitem{lever2012conditional}
S.~Gr{\"u}new{\"a}lder, G.~Lever, L.~Baldassarre, S.~Patterson, A.~Gretton, and
  M.~Pontil, ``Conditional mean embeddings as regressors,'' in \emph{Int'l
  Conf. Mach. Learn.}, 2012, pp. 1823--1830.

\bibitem{sriperumbudur2010hilbert}
B.~Sriperumbudur, A.~Gretton, K.~Fukumizu, B.~Sch\"{o}lkopf, and G.~Lanckriet,
  ``{Hilbert} space embeddings and metrics on probability measures,'' \emph{J.
  Mach. Learn. Res.}, vol.~11, pp. 1517--1561, Aug. 2010.

\bibitem{gretton2007kernel}
A.~Gretton, K.~Borgwardt, M.~Rasch, B.~Sch\"{o}lkopf, and A.~Smola, ``A kernel
  method for the two-sample-problem,'' in \emph{Adv. Neural Inf. Proc. Sys.},
  2007, pp. 513--520.

\bibitem{gretton2012kernel}
A.~Gretton, K.~Borgwardt, M.~Rasch, B.~Sch{\"o}lkopf, and A.~Smola, ``A kernel
  two-sample test,'' \emph{J. Mach. Learn. Res.}, vol.~13, pp. 723--773, 2012.

\bibitem{fukumizu2008kernel}
K.~Fukumizu, A.~Gretton, X.~Sun, and B.~Sch\"{o}lkopf, ``Kernel measures of
  conditional dependence,'' in \emph{Adv. Neural Inf. Proc. Sys.}, 2008, pp.
  489--496.

\bibitem{vinod2019affine}
A.~Vinod and M.~Oishi, ``Affine controller synthesis for stochastic
  reachability via difference of convex programming,'' in \emph{{IEEE} Conf.
  Dec. \& Ctrl. (to appear)}, 2019.

\bibitem{vinod2019sreachtools}
A.~Vinod, J.~Gleason, and M.~Oishi, ``{S}{R}each{T}ools: A {Matlab} stochastic
  reachability toolbox,'' in \emph{Hybrid Sys: Comp. \& Ctrl.}, 2019, pp.
  33--38.

\bibitem{rahimi2008random}
A.~Rahimi and B.~Recht, ``Random features for large-scale kernel machines,'' in
  \emph{Adv. Neural Inf. Process. Syst.}, 2008, pp. 1177--1184.

\bibitem{le2013fastfood}
Q.~Le, T.~Sarl{\'o}s, and A.~Smola, ``Fastfood--approximating kernel expansions
  in loglinear time,'' in \emph{Int'l Conf. Mach. Learn.}, 2013.

\end{thebibliography}


\end{document}